\documentclass[11pt]{amsart}
\usepackage[colorlinks=true,pagebackref,hyperindex,citecolor=green,linkcolor=red]{hyperref}
\usepackage{amsmath}
\usepackage{amsfonts}
\usepackage{amssymb}
\usepackage{color}
\usepackage[all]{xy}  
\usepackage{enumerate}
\usepackage[top=1in, bottom=1in, left=1in, right=1in]{geometry}
\usepackage{mathrsfs}

\usepackage{stmaryrd}

%\usepackage{mathabx}

% THEOREM Environments ---------------------------------------------------
\theoremstyle{definition}
\newtheorem{theorem}{Theorem}[section]

\newtheorem{questions}[theorem]{Questions}
\newtheorem{corollary}[theorem]{Corollary}
\newtheorem{lemma}[theorem]{Lemma}
\newtheorem{proposition}[theorem]{Proposition}

\theoremstyle{definition}
\newtheorem{definition}[theorem]{Definition}

\newtheorem{example}[theorem]{Example}

\newtheorem*{conjecture}{Conjecture}
\newtheorem{remark}[theorem]{Remark}
\numberwithin{equation}{subsection}

% MATH -------------------------------------------------------------------

%For p^e-th roots

\newcommand{\m}{\mathfrak{m}}

%For sets

\newcommand{\NN}{\mathbb{N}}
\newcommand{\ZZ}{\mathbb{Z}}
\newcommand{\QQ}{\mathbb{Q}}
\newcommand{\FF}{\mathbb{F}}

\newcommand{\MM}{\mathcal{M}}

\newcommand{\Spec}{\operatorname{Spec}}
\newcommand{\Depth}{\operatorname{depth}}
\newcommand{\Hom}{\operatorname{Hom}}

\newcommand{\Ext}{\operatorname{Ext}}

\newcommand{\Min}{\operatorname{min}}

%Connecting map in LES

%For notes

 %For red comments on typos

%Alessandro's commands
\newcommand{\ls}{\leqslant}
\newcommand{\gs}{\geqslant}
\newcommand{\ds}{\displaystyle}

\newcommand{\p}{\mathfrak{p}}

\newcommand{\ov}[1]{\overline{#1}}

\newcommand{\n}{\mathfrak{n}}

\newcommand{\ps}[1]{\llbracket {#1} \rrbracket}
%\pagestyle{plain}
%-------------------------------------------------------

\begin{document}
\newcommand{\tens}{\otimes}
\newcommand{\hhtest}[1]{\tau ( #1 )}
\renewcommand{\hom}[3]{\operatorname{Hom}_{#1} ( #2, #3 )}
\newcommand{\ind}{\operatorname{index}}
\newcommand{\gll}{\operatorname{g\ell\ell}}
\newcommand{\ord}{\operatorname{ord}}
\renewcommand{\ll}{\operatorname{\ell\ell}}
\newcommand{\soc}{\operatorname{soc}}
\newcommand{\CCC}{\mathfrak{C}}
\newcommand{\frk}{\operatorname{f-rank}}

\title{A counterexample to a Conjecture of Ding}
\author{Alessandro De Stefani}
\subjclass[2010]{Primary 13H10; Secondary 13C05, 13C14.}

\keywords{Ding's conjecture, index of a ring, Auslander's delta invariant, maximal Cohen-Macaulay approximations}

\maketitle
\begin{abstract} We give a counterexample to a conjecture posed by S. Ding in \cite{Ding} regarding the index of a Gorenstein local ring by exhibiting several examples of one dimensional local complete intersections of embedding dimension three with index $5$ and generalized L{\"o}ewy length $6$.
\end{abstract}
\section{Introduction}
Let $(R,\m,k)$ be a Cohen-Macaulay local ring of Krull dimension $d$. For a finitely generated $R$-module $M$, Auslander's $\delta$ invariant $\delta(M)$ of $M$ is defined as the smallest integer $m$ such that there is an epimorphism $X \oplus R^m \to M \to 0$ with $X$ a maximal Cohen-Macaulay module with no free direct summands. In the case $M=R/\m^n$, where $n \gs 1$ is an integer, it can be shown that $\delta(R/\m^n) \ls 1$ for all $n$, and that if equality holds for some $n$, then it holds for all $i \gs n$. The index of $R$ is defined as $\ind(R): =\inf\{n \mid \delta(R/\m^n) = 1\}$. For a module $M$ of finite length let $\ll(M)$ be its L{\"o}ewy length, that is the smallest integer $n$ such that $\m^nM = 0$. For a finitely generated $R$-module $M$ of dimension $c$ define the generalized L{\"o}ewy length as $\gll(M):= \min\{\ll(M/(x_1,\ldots,x_c)M) \mid x_1,\ldots,x_c$  is a system of parameters for  $M\}$. In \cite{Ding} Ding posed the following conjecture.
\begin{conjecture} Let $(R,\m,k)$ be a Gorenstein local ring of dimension $d$. Then $\ind(R) = \gll(R)$.
\end{conjecture}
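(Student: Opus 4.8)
The title and abstract make plain that the goal is to refute, not establish, this equality, so the plan is to exhibit a Gorenstein local ring $R$ with $\ind(R) \neq \gll(R)$. The inequality $\ind(R) \le \gll(R)$ holds for every Gorenstein local ring (it is the half of Ding's conjecture that can be proved, by reducing modulo a system of parameters realizing the minimum in the definition of $\gll(R)$ and invoking the Artinian Gorenstein case, where the index equals the Loewy length), so I would search for an example in which this inequality is \emph{strict}. Guided by the abstract, I would look among one-dimensional complete intersections $R = S/(f,g)$ with $S = k\ps{x,y,z}$ regular: these are automatically Gorenstein of embedding dimension three, they sit in codimension two (the smallest codimension where failure is plausible, the equality being easy in the hypersurface case), and---taking $f,g$ so that $R$ has an isolated singularity---they have finite index, so that $\ind(R)$ is a genuine integer to be played off against $\gll(R)$.

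To compute $\gll(R)$, recall that in dimension one a system of parameters is a single $R$-regular element $t \in \m$ and $\gll(R) = \min_t \ll(R/tR)$, where $\ll(R/tR)$ is the least $n$ with $\m^n \subseteq tR$. I would first produce one explicit parameter $t_0$ with $\ll(R/t_0R) = 6$, and then prove the matching lower bound $\ll(R/tR) \ge 6$ for \emph{every} parameter $t$. The lower bound is the substantive half: it says that no principal ideal $tR$ can absorb $\m^5$, i.e. $\m^5 \not\subseteq tR$ for all $t \in \m$, and I would extract it from the Hilbert function of $R$ and the leading forms of $f$ and $g$, ruling out a principal reduction of $\m$ with small Loewy length. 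Choosing $f,g$ carefully turns this into a finite, if delicate, verification and yields $\gll(R) = 6$.

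Computing $\ind(R) = 5$ is the heart of the matter. Here I would use the reformulation that, over a one-dimensional Gorenstein ring, $\delta(R/\m^n) = 0$ precisely when some maximal Cohen-Macaulay module $X$ with no free direct summand admits a surjection $X \onto R/\m^n$; consequently $\ind(R)$ is one more than the largest $n$ for which such a surjection exists. To land on $\ind(R) = 5$ I must therefore do two things: (i) construct a non-free maximal Cohen-Macaulay module surjecting onto $R/\m^4$, which forces $\delta(R/\m^4) = 0$; and (ii) prove that \emph{no} non-free maximal Cohen-Macaulay module surjects onto $R/\m^5$, which forces $\delta(R/\m^5) = 1$. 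For (i) I would exhibit an explicit module---an ideal of $R$, or the cokernel of an explicit matrix factorization of $f$ or $g$---together with the surjection. For (ii) I would lean on the complete intersection structure, using matrix factorizations and the cohomological constraints on maximal Cohen-Macaulay modules over $R$ to convert the existence of such a surjection into an ideal-membership or $\Ext$-vanishing statement that I can then contradict.

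The main obstacle is step (ii), the \emph{nonexistence} assertion behind $\delta(R/\m^5) = 1$. Crucially this cannot be reduced to a generic Artinian hyperplane section: the whole content of the example is that passing to a parameter strictly raises the index, so $\ind(R)$ is an honestly one-dimensional invariant and must be controlled over $R$ itself rather than over any reduction. I expect to locate candidate equations and confirm $\ind(R) = 5$ and $\gll(R) = 6$ by explicit machine computation of the relevant delta invariants and Cohen-Macaulay approximations, and then to upgrade the computation to a proof using the matrix-factorization description of the approximation of $R/\m^n$. With (i) and (ii) in hand, $\ind(R) = 5 < 6 = \gll(R)$ contradicts the conjecture.
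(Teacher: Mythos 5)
Your overall strategy is exactly the paper's: refute the conjecture by exhibiting a one-dimensional complete intersection $R=S/(f,g)$, with $S$ a three-dimensional regular local ring, satisfying $\ind(R)=5<6=\gll(R)$. Your plan for the lower bound $\gll(R)\gs 6$ --- show $\m^5\not\subseteq fR$ for \emph{every} parameter $f$ by playing the Hilbert function of $R/fR$ and the initial forms of the defining equations against Macaulay's bound and the multiplicity inequality $\lambda(R/fR)\gs \ord(f)\cdot e(R)$ --- is precisely how Theorem \ref{main} argues. (The actual equations, $(x^2-y^5,\,xy^2+yz^3-z^5)$ in $k[x,y,z]_{(x,y,z)}$, still have to be found, but that is a search, not a logical gap.)

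The genuine gap is in your step (ii), which you yourself flag as the heart of the matter. To prove $\delta(R/\m^5)=1$ you must show that \emph{every} MCM module surjecting onto $R/\m^5$ has a free direct summand --- a statement quantified over an a priori infinite (and here wild) class of modules. ``Matrix factorizations plus cohomological constraints'' does not finitize this quantification, and matrix factorizations in the strict sense are a hypersurface tool, whereas your $R$ has codimension two. You gesture at the right fix when you say you would ``convert the existence of such a surjection into an ideal-membership statement,'' but you supply no mechanism for the conversion. The paper's mechanism is Ding's formula $\delta(R/\m^n)=1+\mu(\Ext^1_R(R/\m^n,R))-\mu(\Hom_R(\m^n,R))$, which after Gorenstein duality becomes Proposition \ref{colon}: for any single non zero-divisor $x$, one has $\delta(R/\m^n)=1$ if and only if $x\m^n:\m\subseteq (x)$. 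This reduces both $\delta(R/\m^5)=1$ and $\delta(R/\m^4)=0$ to two concrete colon-ideal computations ($y\m^5:\m\subseteq yR$ and $y\m^4:\m\not\subseteq yR$) that a computer algebra system certifies. Without this lemma, or an equivalent finite criterion, your plan for computing the index does not close.
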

Herzog showed that the conjecture is true for homogeneous Gorenstein algebras over an infinite field \cite{HerzogIndex} (extending the concepts in an obvious way to the graded case). Later, Ding generalized this result proving that it holds true if the associated graded ring ${\rm gr}_\m(R)$ is Cohen-Macaulay \cite[Theorem 2.1]{Ding2}. Hashimoto and Shida pointed out in \cite{HaSh} that Ding's result needs the residue field $k$ to be infinite, and the conjecture may fail for rings with finite residue field, even if ${\rm gr}_\m(R)$ is Cohen-Macaulay \cite[Example 3.2]{HaSh}. In fact, what is really needed in order to apply Ding's Theorem is that there exists a homogeneous maximal regular sequence $x_1^*, \ldots, x_d^*$ in $({\rm gr}_\m(R))_1$. In Section 2 we recover this result in the one dimensional case, using a different approach (Corollary \ref{CM}). In \cite[Theorem 3.15]{YoshinoAusl}, Yoshino claims that Ding proves that the conjecture is true for all Gorenstein local rings such that $\Depth({\rm gr}_\m(R)) \gs \dim(R)-1$. We could not find a reference for this claim, and the examples in this paper, being all one dimensional, show that this statement is indeed false. See Remark \ref{remDing} for additional comments. In Section 3 we show that if $k$ is any field, then the one dimensional complete intersection domain
\[
%\ds R= \left(\frac{k[x,y,z]}{(x^2-y^5,xy^2-yz^3-z^5)}\right)_{(x,y,z)}
\ds R= \left(\frac{k[x,y,z]}{(x^2-y^5,xy^2+yz^3-z^5)}\right)_{(x,y,z)}
\]
has index 5 and generalized L{\"o}ewy length 6. This provides a counterexample to Ding's conjecture, without any assumptions on the residue field $k$. 
%When $k = \QQ$, we check that $R$ is a domain using Macaulay2.
In Section 4 we give a counterexample in which $R$ quasi-homogeneous (Example \ref{ex1}), and a counterexample where $R$ is analytically irreducible (Example \ref{ex2}). All the computations in this article are made with CoCoA \cite{CoCoa} and Macaulay2 \cite{Mac2}.
\section{Preliminaries and results for one dimensional rings}
Throughout, $(R,\m,k)$ will denote a commutative Noetherian local ring with identity and $M$ will denote a finitely generated $R$-module. We first recall some known facts about superficial elements and Hilbert functions. We include some of the proofs for convenience of the reader. In the following, $\lambda(M)$ denotes the length of an $R$-module $M$, and $e(R)$ denotes the multiplicity of $R$ with respect to the maximal ideal $\m$. Also, for a non-zero element $x \in R$, $\ord(x)$ denotes the order of $x$, that is the largest integer $n$ such that $x \in \m^n$. 
\begin{definition} Let $(R,\m,k)$ be a local ring and let $x \in \m$ with $\ord(x) = d \gs 1$. Then $x$ is said to be a {\it superficial element} (of order $d$) if there exists an integer $c$ such that $(\m^{n+d}:x) \cap \m^c = \m^n$ for all $n \gs c$.
\end{definition}
\begin{lemma} \label{sup} Let $(R,\m,k)$ be a one dimensional Cohen-Macaulay local ring, and let $x \in \m$ be a non zero-divisor. If $d = \ord(x)$, then
\[
\ds \lambda(R/(x)) \gs d \cdot e(R).
\]
Furthermore, equality holds if and only if $x$ is a superficial element.
\end{lemma}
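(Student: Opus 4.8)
The plan is to compute $\lambda(R/(x))$ by comparing it with the Hilbert--Samuel function of $R$. First I would record two structural facts. Since $R$ is Cohen--Macaulay of dimension one and $x$ is a non zero-divisor, $x$ lies outside every associated (equivalently, minimal) prime, so $\dim R/(x) = 0$ and $R/(x)$ has finite length; in particular $\m^n \subseteq (x)$ for $n \gg 0$, whence $\lambda(R/(x)) = \lambda(R/((x)+\m^n))$ for all large $n$. Second, multiplication by $x$ gives an isomorphism of $R$-modules $R \xrightarrow{\sim} (x)$, under which $(x)\cap\m^n$ corresponds to the colon ideal $(\m^n : x)$; hence $(x)/((x)\cap\m^n) \cong R/(\m^n:x)$.

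Combining these, for $n \gg 0$ I would compute
\[
\lambda(R/(x)) = \lambda(R/\m^n) - \lambda\big((x)/((x)\cap\m^n)\big) = \lambda(R/\m^n) - \lambda\big(R/(\m^n:x)\big).
\]
Since $x\in\m^d$ we always have $\m^{n-d}\subseteq(\m^n:x)$, so $\lambda(R/(\m^n:x)) = \lambda(R/\m^{n-d}) - \lambda\big((\m^n:x)/\m^{n-d}\big)$, and substituting yields
\[
\lambda(R/(x)) = \big(\lambda(R/\m^n) - \lambda(R/\m^{n-d})\big) + \lambda\big((\m^n:x)/\m^{n-d}\big).
\]
Because $\dim R = 1$, for $n\gg 0$ the value $\lambda(R/\m^n)$ is given by a polynomial of degree one in $n$ with leading coefficient $e(R)$, so the first parenthesis equals exactly $d\cdot e(R)$. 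As the remaining summand $\lambda\big((\m^n:x)/\m^{n-d}\big)$ is non-negative, this proves $\lambda(R/(x))\gs d\cdot e(R)$, and simultaneously shows that equality holds if and only if $(\m^n:x) = \m^{n-d}$ for all $n \gg 0$.

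It remains to identify this colon stabilization with superficiality of $x$, which I expect to be the main technical point. One implication is immediate from the definition: if $(\m^m:x)=\m^{m-d}$ for all $m$ above some $m_0$, then taking $c=m_0$ gives $(\m^{n+d}:x)\cap\m^c = \m^n\cap\m^c = \m^n$ for $n\gs c$. For the converse I would invoke Artin--Rees for the submodule $(x)\subseteq R$: there is $k$ with $\m^n\cap(x) = \m^{n-k}(\m^k\cap(x)) \subseteq x\,\m^{n-k}$ for $n\gs k$, and since $x$ is a non zero-divisor this forces $(\m^n:x)\subseteq\m^{n-k}$. Thus once $n-k\gs c$, the superficial identity $(\m^n:x)\cap\m^c = \m^{n-d}$ combined with $(\m^n:x)\subseteq\m^c$ gives $(\m^n:x)=\m^{n-d}$. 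Putting the two directions together shows equality $\lambda(R/(x))=d\cdot e(R)$ holds precisely when $x$ is superficial. The delicate step is exactly this passage between the clean stabilization $(\m^n:x)=\m^{n-d}$ and the definition involving the auxiliary integer $c$, where Artin--Rees is needed to control $(\m^n:x)$ uniformly.
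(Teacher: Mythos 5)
Your proof is correct and follows essentially the same route as the paper: both compute $\lambda(R/(x))$ as $d\cdot e(R)$ plus the non-negative correction term $\lambda\left((\m^{n+d}:x)/\m^{n}\right)$ for $n\gg 0$ (you via containments and the Hilbert--Samuel polynomial, the paper via a four-term exact sequence), reducing the equality case to the stabilization $\m^{n+d}:x=\m^n$ for $n\gg 0$. The only divergence is at the final step, where the paper simply cites \cite[Lemma 8.5.3]{HunSw} for the equivalence of this stabilization with superficiality, while you give a correct, self-contained Artin--Rees argument for it.
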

\begin{proof}
For all $n \gs 1$ we have an exact sequence
\[
\xymatrixrowsep{3mm}
	\xymatrixcolsep{8mm}
\xymatrix{ 
\ds 0 \ar[r] &\ds \frac{\m^{n+d}:x}{\m^n} \ar[r] & \ds \frac{R}{\m^n} \ar[r]^-{\cdot x} & \ds \frac{R}{\m^{n+d}} \ar[r] & \ds\frac{R}{\m^{n+d} + (x)} \ar[r] & 0.
}
\]
As $\dim(R) = 1$, there exists an integer $N$ such that $\lambda\left(\m^n/\m^{n+1}\right) = e(R)$ and $\m^{n+d} \subseteq (x)$ for all $n \gs N$. For $n \gs N$ we have
\[
\ds \lambda(R/(x)) = \lambda\left(\frac{R}{\m^{n+d}+(x)}\right) = \lambda\left(\frac{\m^n}{\m^{n+d}}\right) + \lambda\left(\frac{\m^{n+d}:x}{\m^n}\right) = d \cdot e(R) + \lambda\left(\frac{\m^{n+d}:x}{\m^n}\right) \gs d \cdot e(R).
\]
Finally, equality holds if and only if $\m^{n+d}:x = \m^n$ for all $n \gg 0$, and by \cite[Lemma 8.5.3]{HunSw} this happens if and only if $x$ is a superficial element.
\end{proof}
\begin{remark} \label{par} In the above setting, a non zero-divisor $x \in R$ with $\ord(x) = d$ is a superficial element if and only if $x$ is a reduction of $\m^d$, if and only if the initial form $x^*$ is a homogeneous parameter of degree $d$ in the associated graded ring ${\rm gr}_m(R)$. See \cite{RossiValla} or \cite{HunSw} for more details.
\end{remark}
Given a local ring $(R,\m,k)$, the {\it Hilbert function of $R$}, denoted $HF_R$, is defined to be the Hilbert function of the associated graded ring ${\rm gr}_\m(R) = \bigoplus_{n\gs 0} \ \m^n/\m^{n+1}$, that is
\[
\ds HF_R(n):= \lambda(\m^n/\m^{n+1}) \ \ \ \ \mbox{ for all } n \gs 0.
\]
For given integers $d,n$ there is a unique expression $d= {k_n \choose n} + {k_{n-1} \choose n-1} + \ldots + {k_1 \choose 1}$ with $k_n > k_{n-1} > \ldots > k_1 \gs 0$. Define $d^{\langle n \rangle} := {k_n+1 \choose n+1} + {k_{n-1}+1 \choose n} + \ldots + {k_1+1 \choose 2}$. The following is a well known Theorem of Macaulay on Hilbert functions of standard graded algebras over a field.
\begin{theorem}(\cite{Mac}, \cite[Theorem 4.2.10]{BrHe}) \label{Mac} Let $k$ be a field and let $A$ be a standard graded $k$-algebra. Then 
\[
\ds HF_A(n+1) \ls HF_A(n)^{\langle n \rangle} \ \ \ \mbox{ for all } n \gs 1.
\]
\end{theorem}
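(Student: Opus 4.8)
The plan is to reduce the statement to a purely combinatorial fact about monomial ideals and then to invoke the extremal behaviour of lexicographic segments. Write $A = S/I$ with $S = k[x_1,\ldots,x_r]$ a standard graded polynomial ring and $I \subseteq S$ a homogeneous ideal. The first step is to replace $I$ by an initial ideal $\mathrm{in}(I)$ with respect to a fixed monomial order: since Gr\"obner degeneration preserves the dimension of each graded component, one has $HF_{S/\mathrm{in}(I)} = HF_A$, and so it suffices to treat the case in which $I$ is a monomial ideal. In that situation $HF_A(n)$ counts the \emph{standard} monomials of degree $n$, that is, those not lying in $I$, and these form an order ideal in the divisibility poset. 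Since any divisor of a standard monomial is again standard, every standard monomial of degree $n+1$ has the form $x_i m$ for some standard monomial $m$ of degree $n$; hence the standard monomials of degree $n+1$ lie in the shadow $\set{x_i m}$ of those of degree $n$, and the problem becomes one of bounding the growth of an order ideal.

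The core of the argument is Macaulay's extremality of the lexicographic ideal: among all monomial ideals $J \subseteq S$ with $\dim_k (S/J)_n = h := HF_A(n)$, the lexicographic ideal $L$, whose graded components are spanned by the lexicographically largest monomials, satisfies $\dim_k (S/L)_{n+1} \gs \dim_k (S/J)_{n+1}$. I would establish this by a compression argument, successively replacing $J$ in each variable direction by the corresponding lex segment while checking that $\dim_k (S/J)_{n+1}$ does not decrease, followed by an induction on the number of variables $r$ to pin down $L$ as the universal maximizer. This is the step I expect to be the main obstacle: controlling the shadow under each compression, and verifying that the compressed configuration remains an order ideal, requires careful combinatorial bookkeeping. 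Alternatively, one could deduce the inequality from Green's hyperplane restriction theorem by inducting through a cut by a generic linear form, which circumvents the explicit compressions.

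Finally, for the lexicographic ideal the count is completely explicit. Writing $h = HF_A(n)$ in its Macaulay representation
\[
h = \binom{k_n}{n} + \binom{k_{n-1}}{n-1} + \cdots + \binom{k_1}{1}, \qquad k_n > k_{n-1} > \cdots > k_1 \gs 0,
\]
a direct enumeration of the degree-$(n+1)$ monomials outside $L$ yields
\[
\dim_k (S/L)_{n+1} = \binom{k_n+1}{n+1} + \binom{k_{n-1}+1}{n} + \cdots + \binom{k_1+1}{2} = h^{\langle n \rangle}.
\]
Combining this computation with the extremality from the compression step gives $HF_A(n+1) \ls HF_A(n)^{\langle n \rangle}$, which is the desired bound.
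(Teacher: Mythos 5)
The paper does not prove this statement at all: it is quoted as a classical theorem of Macaulay, with \cite{Mac} and \cite[Theorem 4.2.10]{BrHe} cited in place of a proof. So there is no in-paper argument to compare against, and your proposal has to stand on its own.

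As an outline, what you describe is the standard textbook proof (essentially the route taken in Bruns--Herzog): pass to an initial ideal, which preserves the Hilbert function; observe that the standard monomials form an order ideal so that degree $n+1$ standard monomials lie in the shadow of those in degree $n$; invoke extremality of the lex segment; and compute $\dim_k(S/L)_{n+1} = h^{\langle n\rangle}$ explicitly for the lex ideal. All of these reductions are correct, and the final enumeration for the lex segment is a genuine (if fiddly) computation that does come out to the stated binomial expression. The problem is that the middle step --- that among all order ideals of monomials of degree $n$ of a fixed size $h$, the lex segment has the largest possible complement-of-shadow in degree $n+1$ --- is not an auxiliary lemma but \emph{is} Macaulay's theorem in its combinatorial form. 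You explicitly defer it (``this is the step I expect to be the main obstacle''), gesture at compression in each variable direction without verifying that compression does not increase the shadow or that it terminates at the lex configuration, and offer Green's hyperplane restriction theorem as an alternative without deriving the bound from it. As written, the proposal reduces the theorem to an equivalent or harder statement and proves neither. To close the gap you would need to actually carry out the compression argument (the $i$-compressions, the check that a compressed set is still an order ideal whose shadow has not grown, and the induction on the number of variables), or else give the numerical induction via Green's theorem; either is several pages of combinatorics, which is presumably why the paper simply cites the result.
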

\begin{remark} Let $P=k[x_1,\ldots,x_n]$ be a polynomial ring over a field, and let $\n=(x_1,\ldots,x_n)$. Assume that $Q$ is either the completion $\widehat P$ with respect to $\n$, or the localization $P_\n$. Let $R:=Q/I$ for some ideal $I \subseteq Q$, and let $\m$ be the maximal ideal of $R$. Then it is well know that ${\rm gr}_\m(R) \cong P/I^*$, where $I^* = (f^* \mid f \in I)$ is the {\it initial ideal of $I$}, that is the homogeneous ideal of $P$ generated by the initial forms $f^*$ of elements $f \in I$.
\end{remark}

We recall here some known results about the delta invariant and the index. For more details and complete proofs about these facts we refer the reader to \cite{Ding} and \cite[Chapter 11]{LW}.
\begin{definition} Let $(R,\m)$ be a Cohen-Macaulay local ring and let $M$ be a finitely generated $R$-module. The {\it Auslander delta invariant} of $M$ is defined as
\[
\ds \delta(M) = \min\{\frk(X) \mid X \mbox{ is MCM  and there exists a surjection } X \to M \to 0\},
\]
where $\frk(X)$ denotes the free rank of $X$, i.e. the maximal number of copies of $R$ splitting out of $X$, and MCM stands for Maximal Cohen-Macaulay (i.e. $\Depth(X) = \dim(R)$).
\end{definition}
Using the notion of minimal MCM approximation, originally due to Auslander and Buchweitz \cite{AusBuch}, one can compute $\delta(M)$ by looking at a specific choice of a MCM module mapping onto $M$. A MCM approximation of a finitely generated $R$-module $M$ is an exact sequence
\[
\xymatrixrowsep{3mm}
	\xymatrixcolsep{8mm}
\xymatrix{ 
\ds 0 \ar[r] & Y_M \ar[r]^-i & X_M \ar[r] & M \ar[r] & 0,
}
\] 
where $X_M$ is a finitely generated MCM $R$-module and $Y_M$ is a finitely generated $R$-module of finite injective dimension. The approximation is called minimal if $Y_M$ and $X_M$ have no common direct summand via $i$ (see \cite[Lemma 11.12 and Proposition 11.14]{LW} for other equivalent definitions of minimality). When $R$ is Cohen Macaulay, minimal MCM approximations of a finitely generated  $R$-module $M$ always exist, and they are unique up to isomorphism of short exact sequences inducing the identity on $M$ \cite[Theorem 11.17 and Proposition 11.13]{LW}.
\begin{proposition}[\cite{LW}, Proposition 11.27] If $0 \to Y_M \to X_M \to M \to 0$ is a minimal MCM approximation of $M$ then
\[
\ds \delta(M) = \frk(X_M).
\] 
\end{proposition}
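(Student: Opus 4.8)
The plan is to prove the two inequalities $\delta(M) \ls \frk(X_M)$ and $\delta(M) \gs \frk(X_M)$ separately. The first is immediate from the definition: since $X_M$ is a MCM module admitting the surjection $\pi \colon X_M \onto M$, the module $X_M$ is one of the competitors in the minimum defining $\delta(M)$, so $\delta(M) \ls \frk(X_M)$. The content of the proposition is therefore the reverse inequality, and for it I would show that \emph{every} MCM module $X$ equipped with a surjection $p \colon X \onto M$ satisfies $\frk(X) \gs \frk(X_M)$; taking the minimum over all such $X$ then yields $\delta(M) \gs \frk(X_M)$.

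So fix an MCM module $X$ and a surjection $p \colon X \onto M$. The first step is to factor $p$ through $\pi$. Because $Y_M$ has finite injective dimension and $X$ is MCM over the Cohen-Macaulay ring $R$, the standard vanishing $\Ext^1_R(X, Y_M) = 0$ holds; applying $\Hom_R(X, -)$ to $0 \to Y_M \xrightarrow{i} X_M \xrightarrow{\pi} M \to 0$ then shows that $\Hom_R(X, X_M) \to \Hom_R(X, M)$ is surjective. Hence there is $\phi \colon X \to X_M$ with $\pi \circ \phi = p$.

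The key step is to upgrade $\phi$ to a surjection. Since $p = \pi\phi$ is onto, a short diagram chase gives $X_M = \phi(X) + i(Y_M)$: any $y \in X_M$ has $\pi(y) = p(x)$ for some $x \in X$, whence $y - \phi(x) \in \Ker(\pi) = i(Y_M)$. Now I would invoke minimality: by \cite[Lemma 11.12]{LW} the approximation is minimal if and only if $i(Y_M) \subseteq \m X_M$. Substituting, $X_M = \phi(X) + \m X_M$, and Nakayama's lemma (valid since $X_M$ is finitely generated) forces $X_M = \phi(X)$, i.e. $\phi$ is surjective. Finally, writing $n = \frk(X_M)$ so that $X_M \cong R^n \oplus W$ for some module $W$, composing $\phi$ with the projection onto $R^n$ produces a surjection $X \onto R^n$; as $R^n$ is free this splits, exhibiting $R^n$ as a direct summand of $X$ and giving $\frk(X) \gs n = \frk(X_M)$, as required.

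I expect the surjectivity of $\phi$ to be the main obstacle, since that is precisely where both hypotheses on the approximation enter essentially: the finite injective dimension of $Y_M$ supplies the factorization $\phi$, while the minimality condition $i(Y_M) \subseteq \m X_M$ is what allows Nakayama to promote the equality $X_M = \phi(X) + i(Y_M)$ to surjectivity. Dropping minimality breaks the argument, so the delicate point is the cited reformulation of minimality; by contrast the concluding splitting-off of the free summand is routine.
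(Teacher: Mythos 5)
The paper does not prove this proposition itself (it is quoted from \cite{LW}), so I am assessing your argument on its own terms. The easy inequality $\delta(M)\ls\frk(X_M)$, the existence of the lift $\phi$ via $\Ext^1_R(X,Y_M)=0$, and the identity $X_M=\phi(X)+i(Y_M)$ are all correct. The gap is the assertion that minimality is equivalent to $i(Y_M)\subseteq\m X_M$: this equivalence is false (only the implication ``$i(Y_M)\subseteq\m X_M$ $\Rightarrow$ minimal'' holds), and as a consequence your intermediate claim that $\phi$ is surjective also fails in general. Concretely, let $R=k\ps{x,y}/(xy)$ and $M=R/(x^2)$. The minimal MCM approximation of $M$ is
\[
\ds 0 \to R \xrightarrow{\ 1\,\mapsto\,(\bar 1,\,-x)\ } R/(x)\oplus R \to R/(x^2)\to 0 .
\]
It is minimal: a common direct summand via $i$ would have to be the free module $R$, forcing $R/(x)\oplus R\cong R\oplus R/(x^2)$ and contradicting Krull--Schmidt. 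Yet $i(1)=(\bar 1,-x)\notin\m X_M$, since its first coordinate generates $R/(x)$. Moreover the surjection $p\colon R\to M$ lifts to some $\phi\colon R\to X_M$, and no such $\phi$ can be onto because $\mu(R)=1<2=\mu(X_M)$. So the step ``Nakayama promotes $X_M=\phi(X)+i(Y_M)$ to surjectivity of $\phi$'' breaks down exactly where you flagged the argument as delicate.

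The repair is to apply Nakayama only to the free part of $X_M$. Write $X_M=R^n\oplus W$ with $n=\frk(X_M)$ and $W$ without free summands, and let $q\colon X_M\to R^n$ be the projection. Minimality \emph{does} imply $q(i(Y_M))\subseteq\m R^n$: otherwise some $y\in Y_M$ has $qi(y)\notin\m R^n$, so projecting onto a suitable coordinate gives a surjection $Y_M\to R$ which splits, and one checks that this exhibits a common free summand of $Y_M$ and $X_M$ via $i$, contradicting minimality. (In the example above, $q(i(1))=-x\in\m R$, consistent with this.) Applying $q$ to your identity gives $R^n=q\phi(X)+q(i(Y_M))\subseteq q\phi(X)+\m R^n$, so $q\circ\phi\colon X\to R^n$ is surjective by Nakayama; this splits and shows $\frk(X)\gs n$, which is the inequality you wanted. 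So your skeleton is the right one, but the Nakayama step must be performed after projecting onto $R^n$, not on all of $X_M$, and the cited characterization of minimality must be replaced by the weaker (true) statement just described.
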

\begin{remark} \label{props delta} It is easy to see from the definition that if $M$ and $N$ are two $R$-modules and there is a surjection $N \to M \to 0$, then $\delta(M) \ls \delta(N)$. In particular, this implies that $\delta(M) \ls \mu(M)$, where $\mu(M)$ denotes the minimal number of generators of $M$.
\end{remark}
We now focus on $\delta(M)$ for some special choices of $M$: for any integer $n \gs 1$ consider $\delta(R/\m^n)$. It follows from Remark \ref{props delta} that
\[
0 \ls \delta(R/\m) \ls \delta(R/\m^2) \ls \ldots \ldots \ls \delta(R/\m^n) \ls \delta(R/\m^{n+1}) \ls \ldots\ldots \ls 1,
\]
therefore, it makes sense to define
\[
\ds \ind(R) := \inf\{n \mid \delta(R/\m^n) =1\}.
\]
Assuming that $R$ has a canonical module, Ding gives a complete characterization of rings for which the index is finite.
\begin{theorem}[Theorem 1.1, \cite{Ding}] Let $(R,\m)$ be a Cohen Macaulay local ring which has a canonical module. Then $\ind(R)<\infty$ if and only if $R_\p$ is Gorenstein for all $\p \in \Spec(R)\smallsetminus\{\m\}$.
\end{theorem}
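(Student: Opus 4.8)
The plan is to prove the two implications separately, and in both directions the engine is the behaviour of the minimal MCM approximation $0 \to Y \to X \to R/\m^n \to 0$ under localization at a prime $\p \neq \m$. The first thing I would record is that $\ind(R) < \infty$ is equivalent to $\delta(R/\m^n) = 1$ for some (equivalently, all large) $n$, so everything reduces to detecting a free summand of $X = X_n$. Since $R/\m^n$ has finite length, for every $\p \neq \m$ we have $(R/\m^n)_\p = 0$, so localizing the approximation gives $X_\p \cong Y_\p$; here $X_\p$ is MCM over $R_\p$ while $Y_\p$ has finite injective dimension over $R_\p$. The key structural input is that a maximal Cohen--Macaulay module of finite injective dimension over a Cohen--Macaulay local ring with canonical module $\omega$ must be a direct sum of copies of the canonical module: indeed $\Hom_{R_\p}(X_\p,\omega_{R_\p})$ is then MCM of finite projective dimension, hence free by Auslander--Buchsbaum, and dualizing back yields $X_\p \cong \omega_{R_\p}^{\oplus t}$.

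For the direction $\ind(R) < \infty \Rightarrow R_\p$ Gorenstein, I would argue by contraposition. Suppose $R_\p$ is not Gorenstein for some $\p \neq \m$. A free summand of $X$ localizes to a free summand of $X_\p$, so to conclude $\frk(X) = 0$ it suffices to show that $X_\p \cong \omega_{R_\p}^{\oplus t}$ has no free summand. This follows from the fact that the homothety $R_\p \to \Hom_{R_\p}(\omega_{R_\p},\omega_{R_\p})$ is an isomorphism: a splitting $\omega_{R_\p} \cong R_\p \oplus N$ would exhibit $\Hom_{R_\p}(\omega_{R_\p},\omega_{R_\p})$ as a direct sum having $N$, $\Hom_{R_\p}(N,R_\p)$, and $\End(N)$ as summands, and since the local ring $R_\p$ is indecomposable as a module over itself this forces $N = 0$, i.e. $R_\p$ Gorenstein. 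Hence $\frk(X_\p) = 0$, so $\frk(X) = 0$ and $\delta(R/\m^n) = 0$ for every $n$, giving $\ind(R) = \infty$.

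For the converse, assume $R_\p$ is Gorenstein for all $\p \neq \m$, so $\omega$ is locally free of rank one away from $\m$ and, by the computation above, each $X_n$ is locally free on the punctured spectrum. I would reduce to the Artinian case by induction on $d = \dim R$: if $d = 0$ the punctured spectrum is empty and the claim is clear since $\m^n = 0$ and $\delta(R) = 1$ for $n \gg 0$; if $d \geq 1$, choose a nonzerodivisor $x \in \m$ (superficial, after possibly enlarging the residue field), observe that $R/xR$ is again Cohen--Macaulay with canonical module $\omega/x\omega$ and is Gorenstein on its punctured spectrum, and invoke compatibility of the delta invariant with reduction modulo a regular element to relate the indices of $R$ and $R/xR$. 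The main obstacle is exactly this last step: unlike the trivial upper bound $\delta(R/\m^n) \leq \delta(R) = 1$, producing the matching lower bound $\delta(R/\m^n) \geq 1$ for large $n$ is the substantive content. The conceptual reason it should hold is that the obstruction to lifting the free summand of the tautological surjection $R \twoheadrightarrow R/\m^n$ is governed by a module supported on the non-Gorenstein locus, which is now of finite length and hence annihilated by $\m^n$ for $n \gg 0$; turning this heuristic into a proof, either through the reduction to dimension zero or by directly controlling the comparison maps $X_{n+1} \to X_n$, is where the real work lies.
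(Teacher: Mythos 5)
A preliminary remark: the paper does not prove this statement. It is imported verbatim from Ding's paper as a known result and used as a black box, so there is no internal argument to compare yours against; your attempt has to be judged on its own merits, and on those merits it establishes only one of the two implications. The direction $\ind(R)<\infty \Rightarrow R_\p$ Gorenstein for all $\p\ne\m$ is essentially complete and is the standard argument: localizing the minimal MCM approximation kills $R/\m^n$, the resulting $X_\p\cong Y_\p$ is MCM of finite injective dimension and hence isomorphic to $\omega_{R_\p}^{\oplus t}$, and a free summand there would force $R_\p$ to be Gorenstein. One small repair: you split a free summand off a single copy of $\omega_{R_\p}$, whereas you must handle $\omega_{R_\p}^{\oplus t}$ (a summand of a direct sum need not be a sum of summands of the factors); but applying $\Hom{R_\p}{-}{\omega_{R_\p}}$ to a decomposition $\omega_{R_\p}^{\oplus t}\cong R_\p\oplus N$ exhibits $\omega_{R_\p}$ as a direct summand of $R_\p^{\oplus t}$, hence free, which is what you want.

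The converse is a genuine gap, which you yourself flag (``where the real work lies''), and the strategy you outline cannot close it as stated. Your induction requires a dimension-lowering step of the shape: $x$ a nonzerodivisor and $\ind(R/xR)<\infty$ imply $\ind(R)<\infty$. But every Artinian local ring has finite index (for $n\gg 0$ one has $\m^n=0$, so $\delta(R/\m^n)=\delta(R)=\frk(R)=1$), so if such a step held for all Cohen--Macaulay local rings with a canonical module it would prove that \emph{every} such ring has finite index --- contradicting the forward direction you just proved whenever $R$ fails to be Gorenstein at some $\p\ne\m$. Hence the hypothesis ``Gorenstein on the punctured spectrum'' must enter the inductive step itself, and you supply no mechanism for that. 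Note also that the cheap device available in the Gorenstein case --- $\m^n\subseteq(x_1,\ldots,x_d)$ forces $\delta(R/\m^n)\gs\delta(R/(x_1,\ldots,x_d))=1$ because $(x_1,\ldots,x_d)$ then has finite injective dimension --- is exactly the Gorenstein-specific argument recalled in Section 2 of the paper and is unavailable here; and the paper's main theorem is itself a warning that the index interacts badly with reduction modulo a parameter, since there $\ind(R)=5$ while $\ll(R/(f))\gs 6$ for every parameter $f$. Your closing heuristic (the obstruction is supported on the non-Gorenstein locus, hence killed by $\m^n$ for $n\gg 0$) points in the right direction, but it must be converted into an actual argument --- for instance by producing and exploiting a comparison between $R$ and an MCM module of finite injective dimension that is free of rank one on the punctured spectrum --- and no such construction is carried out. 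As written, only the ``only if'' half is proved.
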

If $R$ is Gorenstein and $x_1,\ldots,x_d$ is a maximal regular sequence in $R$, the beginning of a minimal free resolution of $\ov R:=R/(x_1,\ldots,x_d)$ is
\[
\xymatrixrowsep{3mm}
	\xymatrixcolsep{8mm}
\xymatrix{ 
\ds 0 \ar[r] & \Omega_1 \ar[r] & R \ar[r] & \ov R \ar[r] & 0,
}
\] 
and $\Omega_1$ has finite injective dimension since $R$ and $\ov R$ do. Therefore the one above is a MCM approximation of $\ov R$, and it is easily seen to be minimal. Therefore $\delta(R/(x_1,\ldots,x_d)) = 1$ for any choice of a maximal regular sequence $x_1,\ldots,x_d$ in $R$. In particular, if for some $n$ one has $\m^n \subseteq (x_1, \ldots, x_d)$, then there is a surjection $R/\m^n \to \ov R \to 0$, and this gives $\delta(R/\m^n) = 1$ as well. This shows that, when $R$ is Gorenstein, $\ind(R) \ls \ll(R/(x_1,\ldots,x_d))$ for any $x_1,\ldots,x_d$ system of parameters in $R$, and thus $\ind(R) \ls \gll(R)$. In \cite{Ding}, Ding conjectured that equality holds.

In the following we focus our attention on one dimensional rings, where it is easier to describe the index of $R$ because of the following proposition.
\begin{proposition} \label{colon} Let $(R,\m,k)$ be a one dimensional Gorenstein local ring and let $x \in \m$ be a non zero-divisor. For $n \gs 1$, the following facts are equivalent
\begin{enumerate}[(i)]
\item \label{1} $\delta(R/\m^n) = 1$.
\item \label{2} $x^n \in \m((x^n):\m^n)$.
\item \label{3} $x\m^n : \m \subseteq (x)$.
\item \label{4} $\Delta \notin x\m^n:\m$ for any $\Delta \in (x):\m$, $\Delta \notin (x)$.
\end{enumerate}
\end{proposition}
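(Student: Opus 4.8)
The plan is to prove the cycle $(i)\Leftrightarrow(ii)\Leftrightarrow(iii)\Leftrightarrow(iv)$, treating $(i)\Leftrightarrow(ii)$ as the homological heart and the other two as ideal-theoretic translations. Throughout I would work inside the total quotient ring $Q$ of $R$, in which $x$ is invertible, and use that $\ov R=R/(x)$ is Artinian Gorenstein, so $\soc(\ov R)$ is one-dimensional over $k$.

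First, $(iii)\Leftrightarrow(iv)$ is essentially formal. Since $x\m^n\subseteq(x)$, forming the colon with $\m$ gives the crucial inclusion $x\m^n:\m\subseteq(x):\m$. Now $(iii)$ fails exactly when there is some $r\in(x\m^n:\m)\setminus(x)$; by the inclusion any such $r$ automatically lies in $((x):\m)\setminus(x)$ and belongs to $x\m^n:\m$, i.e.\ it is an admissible $\Delta$ witnessing the failure of $(iv)$, and conversely any admissible $\Delta\in x\m^n:\m$ shows $(iii)$ fails. That all admissible $\Delta$ behave alike is explained by $\soc(\ov R)=((x):\m)/(x)$ being one-dimensional over $k$.

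Next, $(i)\Leftrightarrow(ii)$, the crux. From the minimal MCM approximation $0\to Y\to X\to R/\m^n\to0$ one has $\delta(R/\m^n)=\frk(X)$. In dimension one $Y$ is forced to be free: it has finite injective dimension, hence finite projective dimension since $R$ is Gorenstein, while as a submodule of the MCM module $X$ it is itself MCM, so Auslander--Buchsbaum gives $Y$ free. Writing $(-)^*=\Hom_R(-,R)$ and dualizing, the vanishings $\Hom_R(R/\m^n,R)=0=\Ext^1_R(X,R)$ produce $0\to X^*\to Y^*\to\Ext^1_R(R/\m^n,R)\to0$, exhibiting $X^*$ as a syzygy of $\Ext^1_R(R/\m^n,R)$ and giving $\delta(R/\m^n)=\frk(X)=\frk(X^*)$ (as $X$ is reflexive). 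Dualizing instead the sequence $0\to\m^n\to R\to R/\m^n\to0$ identifies $\Ext^1_R(R/\m^n,R)\cong W/R$, where $W:=\Hom_R(\m^n,R)=(R:_Q\m^n)=\tfrac1{x^n}\big((x^n):\m^n\big)$ is a fractional ideal containing $R$. Because $1=x^n/x^n\in W$, condition $(ii)$ says precisely that $1\in\m W$, i.e.\ that the distinguished generator $1$ of $W$ is not a minimal generator. Thus $(i)\Leftrightarrow(ii)$ reduces to showing that the free rank of the syzygy $X^*$ of $W/R$ equals $1$ exactly when $1\in\m W$.

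This reduction is where I expect the real work. A free summand of a syzygy is not visible from naive generator counts, so rather than manipulate presentation matrices I would exploit the lifting property of the approximation: the surjection $R\to R/\m^n$ factors as $R\xrightarrow{g}X\to R/\m^n$, and since $1+\m^n\notin\m\cdot(R/\m^n)$ the element $g(1)$ is automatically a minimal generator of $X$; the content is that $g$ splits off a free summand exactly when $1\in\m W$, which I would deduce from the duality interchanging $0\to Y\to X\to R/\m^n\to0$ with the representation $0\to R\to W\to W/R\to0$ of $\Ext^1_R(R/\m^n,R)$. Finally, for $(ii)\Leftrightarrow(iii)$ I would pass to fractional ideals: $(ii)$ reads $1\in\m(R:_Q\m^n)$, while dividing by $x$ turns $(iii)$ into $(\m^n:_Q\m)\subseteq R$. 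These are matched by the canonical duality $J\mapsto(R:_Q J)$ of the one-dimensional Gorenstein ring together with the identity $(R:_Q\m(R:_Q\m^n))=(\m^n:_Q\m)$ (which uses reflexivity of $\m^n$). The one delicate point is that recovering $(ii)$ from $(iii)$ relies on the relevant fractional ideal equaling its own bidual; one may sidestep this by instead routing the implication as $(iii)\Rightarrow(i)\Rightarrow(ii)$. I regard the free-rank computation inside $(i)\Leftrightarrow(ii)$ as the principal obstacle.
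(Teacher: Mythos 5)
Your reductions of (iii)$\Leftrightarrow$(iv) and (ii)$\Leftrightarrow$(iii) are sound, but the central equivalence (i)$\Leftrightarrow$(ii) is not actually proved: you reduce it to the claim that the free rank of the syzygy $X^*$ of $W/R$ (equivalently, of $X$ itself) is $1$ exactly when $1\in\m W$, and then explicitly defer that computation (``this reduction is where I expect the real work''). That is precisely the content of the step, and nothing in the proposal supplies it. A free summand of a syzygy is, as you note yourself, not detected by generator counts, and the appeal to ``the duality interchanging'' the approximation sequence with $0\to R\to W\to W/R\to 0$ is a plan, not an argument. The paper does not prove this from scratch either: it quotes Ding's formula $\delta(R/\m^n)=1+\mu(\Ext^1_R(R/\m^n,R))-\mu(\Hom_R(\m^n,R))$ from \cite[Proposition 1.2]{Ding1}, after which (i)$\Leftrightarrow$(ii) is a two-line count, since $\Hom_R(\m^n,R)\cong (x^n):\m^n$, $\Ext^1_R(R/\m^n,R)\cong \big((x^n):\m^n\big)/(x^n)$, and these have the same minimal number of generators iff $x^n\in\m\big((x^n):\m^n\big)$. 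You should either cite that formula or prove it; as written the crux is asserted, not established. (Your preliminary observations --- $Y$ free by Auslander--Buchsbaum, the dualized sequence $0\to X^*\to Y^*\to\Ext^1\to 0$, $W=\tfrac{1}{x^n}\big((x^n):\m^n\big)$ --- are all correct and are in effect the ingredients of Ding's proof, but the final identification of $\frk(X)$ is missing.)

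On the remaining parts: your (iii)$\Leftrightarrow$(iv) argument is correct and in fact cleaner than the paper's. The single inclusion $x\m^n:\m\subseteq (x):\m$ shows that any $r\in(x\m^n:\m)\smallsetminus(x)$ is already an admissible $\Delta$ violating (iv), which replaces the paper's exact sequence of socles. Your (ii)$\Leftrightarrow$(iii) via fractional ideals is the same duality the paper runs through colon ideals, namely $(x^{n+1}):\big((x^{n+1}):I\big)=I$; the ``delicate'' biduality point you flag is not a genuine obstacle, since in a one-dimensional Gorenstein local ring every (fractional) ideal containing a nonzerodivisor is reflexive --- but do note that your proposed escape route $(iii)\Rightarrow(i)\Rightarrow(ii)$ is circular, because the only access you have to $(iii)\Rightarrow(i)$ is through (ii).
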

\begin{proof} It is shown in \cite[Proposition 1.2]{Ding1} that in our assumptions
\[
\delta(R/\m^n) = 1 + \mu(\Ext^1_R(R/\m^n,R)) - \mu(\Hom_R(\m^n,R)).
\]
Therefore $\delta(R/\m^n) = 1$ if and only if $\mu(\Ext^1_R(R/\m^n,R)) = \mu(\Hom_R(\m^n,R))$. Notice that 
\[
\ds \mu(\Ext^1_R(R/\m^n,R)) = \mu(\Hom_R(R/\m^n,R/(x^n))) = \mu\left(\frac{(x^n):\m^n}{(x^n)}\right).
\]
On the other hand, $\Hom_R(\m^n,R) \cong (x^n):\m^n$, and putting these facts together we have that $\delta(R/\m^n) = 1$ if and only if $\mu\left(\frac{(x^n):\m^n}{(x^n)}\right) = \mu((x^n):\m^n)$, if and only if $x^n \in \m((x^n):\m^n)$. This shows that (\ref{1}) and (\ref{2}) are equivalent. Using that $R$ is Gorenstein, by duality (\ref{2}) holds if and only if
\[
\ds (x) = (x^{n+1}) : (x^n) \supseteq (x^{n+1}) : \left(\m((x^n):\m^n)\right) = \left((x^{n+1}) : \left((x^n):\m^n\right)\right):\m = x\m^n : \m,
\]
therefore (\ref{2}) is equivalent to (\ref{3}). Finally, (\ref{3}) clearly implies (\ref{4}). For the converse, assume that (\ref{3}) does not hold, that is $x\m^n : \m \not\subseteq (x)$. Consider the following short exact sequence
\[
\xymatrixrowsep{3mm}
	\xymatrixcolsep{8mm}
\xymatrix{ 
\ds 0 \ar[r] & (x)/x\m^n \ar[r] & R/x\m^n \ar[r] & R/(x) \ar[r] & 0,
}
\] 
which induces an exact sequence on socles
\[
\xymatrixrowsep{3mm}
	\xymatrixcolsep{8mm}
\xymatrix{ 
\ds 0 \ar[r] & \ds \frac{(x)\cap (x\m^n:\m)}{x\m^n} \ar[r]^-\psi &\ds \frac{x\m^n: \m}{x\m^n} \ar[r]^-\varphi & \ds \frac{(x):\m}{(x)}.
}
\] 
Note that the last module, that is $\soc(R/(x))$, is isomorphic to $k$, therefore $\varphi$ is either surjective or it is zero. Since we are assuming that $x\m^n:\m \not\subseteq (x)$, we have that $(x) \cap (x\m^n:\m) \subsetneq x\m^n:\m$. Therefore $\psi$ is not an isomorphism, and thus $\varphi$ is surjective. This means that there exists a choice of $\Delta \in (x):\m$, $\Delta \notin (x)$, such that $\Delta \in x\m^n:\m$, proving that (\ref{4}) does not hold.
\end{proof}
As a corollary, we easily recover Ding's conjecture for one dimensional Gorenstein rings with infinite residue field, and whose associated graded ring is Cohen-Macaulay \cite[Theorem 2.1]{Ding2}.
\begin{corollary} \label{CM} Let $(R,\m,k)$ be a one dimensional Gorenstein local ring, and assume that the associated graded ring ${\rm gr}_\m(R)$ has a homogeneous non zero-divisor of degree one. This condition is satisfied, for example, if ${\rm gr}_\m(R)$ is Cohen-Macaulay and $k$ is infinite. Then $\ind(R) = \gll(R)$.
\end{corollary}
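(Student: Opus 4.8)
The plan is to exhibit a single non zero-divisor $x \in \m$ that computes both invariants at once. Recall from the discussion preceding this corollary that $\ind(R) \ls \gll(R)$, and that for any parameter (equivalently, any non zero-divisor) $x \in \m$ one has $\gll(R) \ls \ll(R/(x))$. Thus it suffices to produce one non zero-divisor $x \in \m$ for which $\ind(R) \gs \ll(R/(x))$; the three inequalities $\ind(R) \ls \gll(R) \ls \ll(R/(x)) \ls \ind(R)$ will then collapse, forcing $\ind(R) = \gll(R) = \ll(R/(x))$.

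First I would lift the hypothesized degree one homogeneous non zero-divisor $x^* \in ({\rm gr}_\m(R))_1$ to an element $x \in \m \smallsetminus \m^2$, so that $\ord(x) = 1$ and $x^*$ is the initial form of $x$. The key translation is that $x^*$ being a non zero-divisor in ${\rm gr}_\m(R)$ is equivalent to the colon equalities $\m^{n+1} : x = \m^n$ for all $n \gs 0$; in particular $x$ is itself a non zero-divisor in $R$, hence a parameter. From these equalities I would deduce the crucial intersection formula $\m^n \cap (x) = x\m^{n-1}$ for all $n \gs 1$: the inclusion $\supseteq$ is clear, and if $a = xb \in \m^n$ then $b \in \m^n : x = \m^{n-1}$, giving $a \in x\m^{n-1}$.

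Now set $s := \ll(R/(x))$, so that $\m^s \subseteq (x)$ but $\m^{s-1} \not\subseteq (x)$ (the case $s=1$ means $\m = (x)$, i.e. $R$ is regular, where the statement is immediate, so assume $s \gs 2$). Since $R$ is one dimensional Gorenstein and $x$ is a non zero-divisor, $R/(x)$ is Artinian Gorenstein and its socle $((x):\m)/(x)$ is one dimensional over $k$. The image of $\m^{s-1}$ in $R/(x)$ is annihilated by $\m$, because $\m \cdot \m^{s-1} = \m^s \subseteq (x)$, and is nonzero, because $\m^{s-1} \not\subseteq (x)$; being a nonzero subspace of the one dimensional socle it must equal the whole socle. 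Hence there is $\Delta \in \m^{s-1}$ with $\Delta \in (x):\m$ and $\Delta \notin (x)$. Combining $\Delta \in \m^{s-1}$ with $\Delta\m \subseteq (x)$ and the intersection formula yields $\Delta\m \subseteq \m^s \cap (x) = x\m^{s-1}$, i.e. $\Delta \in x\m^{s-1}:\m$. This is precisely the failure of condition (\ref{4}) of Proposition \ref{colon} for $n = s-1$, so $\delta(R/\m^{s-1}) \ne 1$, whence $\ind(R) \gs s = \ll(R/(x))$, as required.

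The main obstacle is the passage from the hypothesis to the two structural inputs used in the final step: that the non zero-divisor condition on $x^*$ produces the clean colon and intersection identities, and that the socle of $R/(x)$ is detected exactly in top degree $s-1$. Once these are in hand, the conclusion is only the sandwich of inequalities. Finally, for the quoted sufficient condition, if ${\rm gr}_\m(R)$ is Cohen-Macaulay and $k$ is infinite, then ${\rm gr}_\m(R)$ is a one dimensional Cohen-Macaulay standard graded algebra, so a general linear form avoids its finitely many associated primes by prime avoidance, providing the required homogeneous non zero-divisor of degree one.
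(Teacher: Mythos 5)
Your proof is correct and follows essentially the same route as the paper: both arguments hinge on lifting the degree-one non zero-divisor $x^*$ to $x\in\m$, using the Valabrega--Valla identity $\m^{n+1}\cap(x)=x\m^n$ to place a socle element of $R/(x)$ into $x\m^n:\m$, and then invoking Proposition \ref{colon}. The only cosmetic difference is that you run the argument from $s=\ll(R/(x))$ and conclude $\delta(R/\m^{s-1})\ne 1$ via condition (\ref{4}), whereas the paper starts from $n=\ind(R)$ and derives $\m^n\subseteq(x)$ via condition (\ref{3}); these are contrapositive formulations of the same step.
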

\begin{proof} We only have to show that $\ind(R) \gs \gll(R)$ as the other inequality always holds. Let $n=\ind(R)$ and let $x\in \m$ be a non zero-divisor such that $x^*$ is a non zero-divisor in ${\rm gr}_\m(R)$ of degree one. In particular, $\ord(x) = 1$. By Proposition \ref{colon} (\ref{3}) we have that $x\m^n:\m \subseteq (x)$. By way of contradiction suppose that $\m^n \not\subseteq (x)$, so that we can choose $\Delta \in \m^n$ which represents a non-zero socle element in $R/(x)$. Then $\Delta \m \subseteq \m^{n+1} \cap (x) = x\m^{n}$ by Valabrega-Valla's Theorem \cite[Theorem 2.3]{VV}, because $x^*$ is a non zero-divisor in $({\rm gr}_\m(R))_1$. Hence $\Delta \in x\m^n : \m \subseteq (x)$, and this contradicts the fact that $\Delta$ is chosen to be non-zero in $R/(x)$. Thus $\m^n \subseteq (x)$ and $\gll(R) \ls n = \ind(R)$.
\end{proof}
\begin{remark} We point out that in Example 3.2 of \cite{HaSh}, which is $R=\FF_2\ps{x,y}/(xy(x+y))$, there does not exist a homogeneous linear non zero-divisor in ${\rm gr}_{(x,y)R}(R)$, and thus Corollary \ref{CM} (or \cite[Theorem 2.1]{Ding2}) cannot be applied.
\end{remark}

\section{The counterexample} 
If $(R,\m,k)$ is a one dimensional Gorenstein local ring, the index of $R$ can be checked on any non zero-divisor $x \in R$, just by testing whether any of the equivalent conditions in Proposition \ref{colon} is satisfied. On the other hand, a generic choice of a non zero-divisor $x \in R$ will give a maximal value of $\ll(R/(x))$, whereas $\gll(R)$ is defined as the minimum of such L{\"o}ewy lengths. Therefore, to verify that a potential candidate is a counterexample, one should take in account the L{\"o}ewy length of $R/(x)$ for every non zero-divisor $x \in R$. Here is the counterexample.

Let $S=k[x,y,z]_{(x,y,z)}$, where $k$ is a field, and let $\n=(x,y,z)S$ be its maximal ideal. Consider
\[
\ds I=(x^2-y^5,xy^2+yz^3-z^5)S
\]
and let $R:=S/I$. We now want to show that $R$ is a domain. We refer to \cite[Section 19.5]{Conv_Disc_Geom} for more details about some notions and results that we are about to use.

Let $k$ be a field. For a polynomial $f \in k[x,y]$, the Newton polytope $N_f$ of $f$ is the convex hull of all points $(i,j) \in \ZZ^2$, where $ax^iy^j$ appears as a monomial in $f$ with non-zero coefficient $a \in k$. We say that $N_f$ is integer reducible if it can be written as the sum of two convex lattice polygons, each consisting of more than one point. This  means that we can write $N_f=A+B = \{(a_1+b_1,a_2+b_2) \mid (a_1,a_2) \in A, (b_1,b_2)\in B\}$, where $A$ and $B$ are convex lattice polygons containing at least two integer points. Otherwise, $N_f$ is integer irreducible. If the polynomial $f$ is not divisible by either $x$ or $y$, and $N_f$ is integer irreducible, then $f$ is irreducible \cite[Theorem 19.7 and the following Remark]{Conv_Disc_Geom}. Furthermore, if a convex lattice polygon $N$ has en edge of the form $[(0,m),(n,0)]$, with $m$ and $n$ relatively prime, and $N$ is contained in the triangle with vertices $(0,m),(n,0),(0,0)$, then $N$ is integer irreducible \cite[Corollary 19.2]{Conv_Disc_Geom}. 
\begin{lemma} \label{prime} For any field $k$, the ideal $J:=(x^2-y^5,xy^2+yz^3-z^5) \subseteq k[x,y,z]=:T$ is prime.
\end{lemma}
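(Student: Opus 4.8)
The goal is to prove that $J = (x^2 - y^5, xy^2 + yz^3 - z^5) \subseteq T = k[x,y,z]$ is prime for every field $k$. Since $J$ is generated by two elements in a polynomial ring of dimension three, the quotient $T/J$ has dimension at least one, and since these are the defining equations of the one-dimensional ring $R$, I expect $T/J$ to be a one-dimensional domain. The plan is to show that $J$ is prime by exhibiting $T/J$ as a subring of a polynomial ring (or by a direct elimination/irreducibility argument), reducing to the irreducibility criteria for Newton polytopes recalled just before the statement.

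**The elimination strategy.**

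My primary plan is to eliminate the variable $x$. From the first generator $x^2 = y^5$, the ring $T/J$ is a free module of rank two over the subring generated by $y$ and $z$ modulo the relation obtained by eliminating $x$. More precisely, I would compute the elimination ideal $J \cap k[y,z]$, which should be principal, generated by a single polynomial $g(y,z)$ obtained by using $x^2 - y^5$ and $xy^2 + yz^3 - z^5$ to cancel $x$. Concretely, from the second relation $xy^2 = z^5 - yz^3$, squaring gives $x^2 y^4 = (z^5 - yz^3)^2$, and substituting $x^2 = y^5$ yields the relation $y^9 = (z^5 - yz^3)^2 = z^6(z^2 - y)^2$. So $g(y,z) = y^9 - z^6(z^2-y)^2$ is a candidate generator for the elimination ideal. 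The first step is therefore to verify, via a Gr\"obner basis computation or a careful degree/leading-term analysis, that $J \cap k[y,z] = (g)$ and that $x$ is integral of degree exactly two over $k[y,z]/(g)$.

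**Reducing to irreducibility of $g$.**

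Once the elimination is established, I would argue that $J$ is prime provided $g(y,z)$ is irreducible in $k[y,z]$ and the quotient extension by $x$ remains a domain. This is where the Newton polytope criterion enters: I would compute the Newton polytope $N_g$ of $g = y^9 - z^6(z^2 - y)^2 = y^9 - z^6 y^2 + 2 z^8 y - z^{10}$, identify its edges, and invoke the stated criterion (\cite[Corollary 19.2]{Conv_Disc_Geom}) that a polygon with a primitive edge of the form $[(0,m),(n,0)]$ lying inside the corresponding triangle is integer irreducible, hence $g$ is irreducible by \cite[Theorem 19.7]{Conv_Disc_Geom} after checking $g$ is not divisible by $y$ or $z$. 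Here I must confirm the constant-free monomials $y^9$ and $z^{10}$ produce an edge of the Newton polygon joining $(9,0)$ and $(0,10)$ (in $(\deg_y, \deg_z)$ coordinates), with $\gcd(9,10)=1$, and that all other monomials lie below it.

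**The main obstacle.**

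The hardest part will be the final passage from irreducibility of the elimination polynomial $g$ to primeness of $J$ itself: even if $g$ is irreducible, one must rule out that adjoining $x$ with $x^2 = y^5$ introduces zero-divisors, i.e. that the ring $\bigl(k[y,z]/(g)\bigr)[x]/(x^2 - y^5)$ is a domain rather than splitting as a product. This amounts to checking that $y^5$ is \emph{not} a square in the fraction field of $k[y,z]/(g)$, equivalently that the quadratic $t^2 - y^5$ stays irreducible over that field. I expect to handle this by showing the field extension has degree two via a valuation or order-of-vanishing argument at a suitable point of the curve $g = 0$, where $y^5$ has odd order and therefore cannot be a square. The characteristic-two case deserves separate attention, since the separability of $t^2 - y^5$ fails there, but irreducibility (the only thing needed for the domain property) should persist because $y^5$ remains a non-square; I would verify this uniformly by the order-of-vanishing argument, which is insensitive to the characteristic.
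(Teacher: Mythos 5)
There is a genuine gap, and it sits exactly where you predicted the ``main obstacle'' would be --- but the obstacle is not what you think it is. Your structural picture of $T/J$ as a free rank-two extension of $k[y,z]/(g)$ obtained by adjoining $x$ with $x^2=y^5$ is incorrect: you are forgetting that the second generator gives the \emph{linear} relation $xy^2 = z^5-yz^3$, so that $x = (z^5-yz^3)/y^2$ already lies in the fraction field of $k[y,z]/(g)$. The extension is birational, not quadratic. As a consequence, the statement you plan to prove --- that $y^5$ is not a square in $\mathrm{Frac}\bigl(k[y,z]/(g)\bigr)$ --- is false: modulo $g = y^9 - z^6(z^2-y)^2$ one has
\[
y^5 \;=\; \frac{y^9}{y^4} \;=\; \left(\frac{z^3(z^2-y)}{y^2}\right)^{\!2},
\]
so $t^2-y^5$ splits over that field and your order-of-vanishing argument cannot succeed (every valuation assigns $y^5$ even order). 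The ring $\bigl(k[y,z]/(g)\bigr)[x]/(x^2-y^5)$ is genuinely not a domain; it simply is not isomorphic to $T/J$, because it omits the relation $xy^2=z^5-yz^3$.

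The fix is to exploit that linear relation rather than the quadratic one: invert $y$ (legitimate since $y$ is a nonzerodivisor modulo $J$, so $T/J \hookrightarrow (T/J)[y^{-1}]$ and primality can be checked after localization), solve $x = -y^{-1}z^3 + y^{-2}z^5$ from the second generator, and substitute into $x^2-y^5$. This identifies $(T/J)[y^{-1}]$ with $k[y,z,y^{-1}]/(f)$ where $f = z^{10}-2z^8y+z^6y^2-y^9$ (your $g$ up to sign), and reduces everything to the irreducibility of $f$ in $k[y,z]$. That last step you do have right, and it coincides with the paper's argument: the pure powers $z^{10}$ and $y^9$ give a primitive edge ($\gcd(9,10)=1$) of the Newton polygon, the remaining exponent vectors $(1,8)$ and $(2,6)$ lie in the triangle, and the criterion applies in every characteristic (including $2$, where the middle term $-2z^8y$ merely disappears).
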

\begin{proof} Since $y \in T$ is a non-zero divisor modulo $J$, it suffices to show that $JT[y^{-1}]$ is a prime in the localization $T[y^{-1}]$ of $T$ at the element $y$. After some algebraic manipulations, we obtain
\[
\ds JT[y^{-1}] = (x^2-y^5,x+y^{-1}z^3-y^{-2}z^5)T[y^{-1}] = ((-y^{-1}z^3+y^{-2}z^5)^2-y^5,x+y^{-1}z^3-y^{-2}z^5)T[y^{-1}].
\]
Set $x':=x+y^{-1}z^3-y^{-2}z^5$, then we have
\[
\ds JT[y^{-1}] = (y^{-2}z^6-2y^{-3}z^8+y^{-4}z^{10}-y^5,x') T[y^{-1}] = (z^{10}-2z^8y+z^6y^2-y^9,x')T[y^{-1}].
\]
Note that $T[y^{-1}] = k[x,y,z,y^{-1}] = k[x',y,z,y^{-1}]$. Since $x' \in JT[y^{-1}]$, we have that $JT[y^{-1}]$ is prime if and only if $(z^{10}-2z^8y+z^6y^2-y^9)Q[y^{-1}]$ is prime, where $Q=k[y,z]$. Because $y \in Q$ is a non zero-divisor modulo $(z^{10}-2z^8y+z^6y^2-y^9)Q$, we finally reduce the claim to showing that the polynomial $f= z^{10}-2z^8y+z^6y^2-y^9$ is irreducible in $Q$. The exponents of the pure powers of $z$ and $y$ are $10$ and $9$, so they are relatively prime. Furthermore, the pairs of exponents $(8,1)$ and $(6,2)$ of the other monomials $-2z^8y$ and $z^6y^2$ appearing in $f$ are contained in the triangle with vertices $(0,0)$, $(0,9)$, $(10,0)$. It follows from the discussion preceding the lemma that $f$ is irreducible.
\end{proof}
Lemma \ref{prime} shows that, for any field $k$, the ring $R=S/I$ defined above is a domain. Denote by $\m$ the maximal ideal $\n/I$ of $R$. The ring $R$ is a one dimensional complete intersection. Its associated graded ring with respect to $\m$ is $G:={\rm gr}_\m(R) \cong P/I^* $, where $P=k[X,Y,Z]$ and
\[
\ds I^* = (X^2,XY^2,XYZ^3,YZ^6) = (X^2,Y) \cap (X,Z^6) \cap (X^2,Y^2,Z^3) \subseteq P.
\]
From now on, we will identify $G$ with $P/I^*$. The Hilbert function of $R$ is
\begin{eqnarray*}
\begin{tabular}{c|c|c|c|c|c|c|c|c|c|c}
\ $n$ \ & \ $0$ \ & \ $1$ \ & \ $2$ \ & \ $3$ \ & \ $4$ \ & \ $5$ \ & \ $6$ \ & \ $7$ \ & \ $8$ \ & $\ldots$ \\
\hline
$HF_R(n)$ & $1$ & $3$ & $5$ & $6$ & $7$ & $7$ & $8$ & $8$ & $8$ & $\ldots$
\end{tabular}
\end{eqnarray*}
with $HF_R(n) = 8 = e(R)$ for all $n \gs 6$.
\begin{theorem} \label{main} Let $R= S/I$ be as above. Then
\[
\ds \ind(R) = 5 < 6 = \gll(R).
\]
\end{theorem}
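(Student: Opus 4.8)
The plan is to establish the two equalities separately: $\ind(R)=5$ via the colon criterion of Proposition~\ref{colon}, and $\gll(R)=6$ via a length estimate together with the structure of $G={\rm gr}_\m(R)=P/I^*$. Since $R$ is a domain by Lemma~\ref{prime}, every nonzero element of $\m$ is a non zero-divisor, so by Proposition~\ref{colon} the index can be tested on a single convenient $x$; I would take $x=y+z$, whose initial form $Y+Z$ is a homogeneous parameter of degree one in $G$ (it avoids the two minimal primes $(X,Y)$ and $(X,Z)$), so $x$ is superficial of order one by Remark~\ref{par}. First I would compute the colon ideals $x\m^{n}:\m$ for $n=4,5$ and compare them with $(x)$: I expect condition~(\ref{3}) of Proposition~\ref{colon} to fail at $n=4$ and to hold at $n=5$, i.e.\ $x\m^{4}:\m\not\subseteq(x)$ but $x\m^{5}:\m\subseteq(x)$, giving $\delta(R/\m^{4})=0$ and $\delta(R/\m^{5})=1$, hence $\ind(R)=5$. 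These are finite computations in the local ring, of exactly the kind carried out in CoCoA and Macaulay2.

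For the upper bound $\gll(R)\ls 6$ I would reuse $x=y+z$. By Lemma~\ref{sup} we have $\lambda(R/(x))=e(R)=8$, and I would verify directly that $\m^{6}\subseteq(x)$ while $\m^{5}\not\subseteq(x)$, so that $\ll(R/(x))=6$ and therefore $\gll(R)\ls 6$.

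The substance of the theorem is the reverse inequality $\gll(R)\gs 6$: one must show $\m^{5}\not\subseteq(x)$ for \emph{every} non zero-divisor $x\in\m$. First I would bound the order: if $\m^{5}\subseteq(x)$ then $R/(x)$ is a quotient of $R/\m^{5}$, so $\lambda(R/(x))\ls\lambda(R/\m^{5})=\sum_{i=0}^{4}HF_R(i)=22$; combined with $\lambda(R/(x))\gs\ord(x)\cdot e(R)=8\,\ord(x)$ from Lemma~\ref{sup}, this forces $\ord(x)\in\{1,2\}$. The structural input that makes the counterexample work is a socle element of $G$ in degree four: one checks that $XYZ^{2}\in(I^{*}:\m)\setminus I^{*}$, since $X\cdot XYZ^{2},\,Y\cdot XYZ^{2},\,Z\cdot XYZ^{2}$ lie in $(X^{2}),(XY^{2}),(XYZ^{3})$ respectively. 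Consequently multiplication $G_{4}\to G_{5}$ by any linear form annihilates $XYZ^{2}$ and is never surjective, and since $\dim_k G_{3}=6<7=\dim_k G_{5}$ the same failure occurs for multiplication $G_{3}\to G_{5}$ by any element of $G_{2}$. This is precisely the defect responsible for $HF_R(5)=7<8=e(R)$, and hence for $\ind(R)<\gll(R)$.

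The hard part is that one cannot simply transfer these graded obstructions to $R$. The associated graded ring is not Cohen--Macaulay: the decomposition $I^{*}=(X^{2},Y)\cap(X,Z^{6})\cap(X^{2},Y^{2},Z^{3})$ exhibits the embedded prime $\m$, so Valabrega--Valla does not apply, the initial ideal $(x)^{*}$ may strictly contain $(x^{*})$, and the containment $\m^{5}\subseteq(x)$ does \emph{not} reduce to surjectivity of multiplication by $x^{*}$ on $G$. The obstruction above rules out $\m^{5}=x\m^{5-\ord(x)}$ for every $x$, but $\m^{5}\subseteq(x)$ is a priori weaker, since higher-order terms of $x$ can enlarge $(x)^{*}$. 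To close this gap I would use that $\m^{5}\subseteq(x)$ depends only on the image of $x$ in $R/\m^{M}$ for a fixed $M$ and, given $\ord(x)\ls 2$, only on finitely many coefficients of $x$; this reduces the universally quantified statement to a single parametrized elimination problem, whose infeasibility is the computation performed in CoCoA and Macaulay2. This last reduction, ensuring that no $x$ whatsoever achieves $\m^{5}\subseteq(x)$, is where essentially all the difficulty lies.
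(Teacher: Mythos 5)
Your treatment of $\ind(R)=5$ and of the upper bound $\gll(R)\ls 6$ matches the paper in substance (the paper tests Proposition~\ref{colon} on $x=y$ rather than $y+z$, but Proposition~\ref{colon} applies to any non zero-divisor, so the choice is immaterial), and your order bound $\ord(f)\ls 2$ via $\lambda(R/\m^5)=22<8\ord(f)$ is a correct variant of the paper's handling of the case $\ord(f)\gs 3$. The observation that $XYZ^2$ is a degree-four socle element of $G$ is correct and does explain heuristically why the example works.

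However, there is a genuine gap exactly where you locate the difficulty: the lower bound $\gll(R)\gs 6$ for elements of order $1$ and $2$. You correctly note that the graded obstruction does not transfer to $R$ because $(f)^*$ may strictly contain $(f^*)$, but you then close the argument by asserting that the universally quantified statement ``$\m^5\not\subseteq(f)$ for all $f$'' reduces to ``a single parametrized elimination problem, whose infeasibility is the computation performed in CoCoA and Macaulay2.'' That computation is not performed in the paper, is not specified in your proposal, and would not in any case establish the theorem as stated, since the result is claimed for an \emph{arbitrary} field $k$: a machine verification over $\QQ$ (or any fixed field) says nothing about other characteristics, and a genuinely parametrized, characteristic-free elimination is a substantial piece of work that you have not reduced to anything checkable. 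The paper instead closes this gap by hand: assuming $\n^5\subseteq J:=I+(f)$, it splits into cases according to $\ord(f)$ and $\lambda((J^*)_2)$, and in each case derives a contradiction from one of three sources --- a socle-dimension count in the Gorenstein ring $R/fR$ (when $f=ux+g$), the inequality $\lambda(R/fR)\gs\ord(f)\cdot e(R)$ of Lemma~\ref{sup} combined with Macaulay's bound (Theorem~\ref{Mac}) on the possible Hilbert functions of $R/fR$, or a comparison $HF_{P/(I^*+(f^*))}(3)<HF_{P/J^*}(3)$ that contradicts the canonical surjection $P/(I^*+(f^*))\onto P/J^*$; in the last step superficiality of $f$ is used to pin down $f^*$ modulo the minimal primes of $G$. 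None of this case analysis appears in your proposal, so the core of the theorem remains unproved.
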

\begin{proof} Using either CoCoA or Macaulay2 we get
\[
\ds y\m^5: \m = (y^5,xy^3,yz^4,xyz^3,y^3z^2,xy^2z^2,y^4z)R \subseteq yR
\]
and 
\[
\ds y\m^4:\m = (y^4,yz^3,y^2z^2,xyz^2,y^3z,xy^2z,xz^4)R \not\subseteq yR.
\]
Therefore $\ind(R) = 5$ by Proposition \ref{colon}. On the other hand, it is easy to see that $\m^6 \subseteq yR$, therefore $\gll(R) \ls 6$, and we want to show that equality holds. To do this, we need to prove that for any $f \in \m \smallsetminus\{0\}$ we have $\ll(R/fR) \gs 6$, or equivalently that $\m^5 \not\subseteq fR$. Assume the contrary, i.e., that there exists $f \in \m$ such that $\m^5 \subseteq fR$. Lifting to $S$ we get that $\n^5 \subseteq I+(f) = :J$, that is
\[
\ds J = J + \n^5 = (x^2,xy^2+yz^3,f) + \n^5.
\]
First, assume that $\ord(f) = 1$, and let $f^*$ be the initial form of $f$. Since $\ord(f) =1$, $f$ can be made a part of a minimal set of generators of $\n$: say that $f,a,b \in \n$ are linearly independent modulo $\n^2$. Furthermore, we have
\[
\ds J \cap \n^2 + \n^3 = (x^2,f^2,af,bf) + \n^3,
\]
so that
\[
\ds 3 \ls \lambda((J^*)_2) = \lambda\left(\frac{J \cap \n^2 + \n^3}{\n^3}\right) \ls 4,
\]
and such length is equal to three if and only if $x^2 \in (f^2,af,bf) + \n^3$. But, by choice of $f,a,b$, this happens if and only if $f = u x+g$, for some unit $u \in S$ and some some $g \in S$ of order at least two. Then, for $h=u^{-1}g$, we have
\[
\ds \n^5 + (f) \subseteq J = (x^2,xy^2+yz^3,ux+g) + \n^5 = (h^2,-hy^2+yz^3,ux+g) + \n^5 \subseteq \n^4+(f).
\]
Notice that $\lambda\left(\frac{\n^4+(f)}{\n^5+(f)}\right) = 5$, and
\[
\ds \lambda\left(\frac{J}{\n^5+(f)}\right) = \lambda \left(\frac{(h^2,-hy^2+yz^3)+(\n^5+(f))}{\n^5+(f)}\right) \ls 2,
\]
therefore $\lambda\left(\frac{\n^4+(f)}{J}\right) \gs 3$. But this is a contradiction, because $\frac{\n^4+(f)}{J} \subseteq \soc\left(S/J\right) = \soc(R/fR)$, which is simple because $R$ is Gorenstein. We ruled out the case $\lambda((J^*)_2) =3$, so we are left with the case $\lambda((J^*)_2) = 4$. Under such condition, the Hilbert function of $R/fR$, which is the Hilbert function of $P/J^*$, is
\begin{eqnarray*}
\begin{tabular}{c|c|c|c|c|c|c|c|c|c}
\ $n$ \ & \ $0$ \ & \ $1$ \ & \ $2$ \ & \ $3$ \ & \ $4$ \ & \ $5$ \ & \ $6$ \ & \ $7$ \ & $\ldots$ \\
\hline
$HF_{R/fR}(n)$ & $1$ & $2$ & $2$ & $h$ & $k$ & $0$ & $0$ & $0$ & $\ldots$
\end{tabular}
\end{eqnarray*}
where $k\ls 1$ because $R$ is Gorenstein and because $\m^5 \subseteq fR$ by assumption, and $h\ls 2$ by Macaulay's Theorem \ref{Mac}. On the other hand, $\lambda(R/fR) \gs e(R) = 8$ by Lemma \ref{sup}, therefore we necessarily have $k=1$ and $h = HF_{R/fR}(3) = HF_{P/J^*}(3) =2$. In addition, again by Lemma \ref{sup}, $f$ must be a superficial element. Since
\[
I^* = (X^2,Y) \cap (X,Z^6) \cap (X^2,Y^2,Z^3)
\]
and $f$ is superficial if and only if $f^* \notin \bigcup_{\p \in \Min(G)} \p = (X,Y)G \cup (X,Z)G$ by Remark \ref{par}, we conclude that $f^* \notin (X,Y)$. Let $\MM = (X,Y,Z)$ be the irrelevant maximal ideal of $P$. Then, since $f^* \in \MM \smallsetminus (\MM^2 \cup (X,Y))$, we have that $(f^*,X,Y) = \MM$. Let $K:=I^*+(f^*) \subseteq P$, then we have
\[
\ds \lambda\left(K_3 \right) \gs \lambda \left(\frac{(X^3,X^2Y,XY^2) + f^*\MM^2}{\MM^4} \right) = 9.
\]
But $HF_P(3) = 10$, therefore $HF_{P/K}(3) \ls 1$. In particular, we see that $HF_{P/K}(3) < HF_{P/J^*}(3)$. On the other hand, there is always a surjective homomorphism of graded rings 
\begin{equation}
\label{surj}
\ds P/K \to P/J^* \to 0,
\end{equation}
which is homogeneous of degree zero (see for example \cite[Corollary 8.6.2]{HunSw}). This gives the desired contradiction. We analyzed all possible cases when $\ord(f) = 1$, so let us assume now that $\ord(f) = 2$. Again, let $J:=I+(f)$ and $K:=I^*+(f^*)$. In this case $J \cap \n^2 + \n^3 = (x^2,f) + \n^3$, so that
\[
\ds 1 \ls \lambda((J^*)_2) = \lambda\left(\frac{(x^2,f) + \n^3}{\n^3}\right) \ls 2,
\]
and such length is equal to one if and only if  $f = ux^2+g$ for some unit $u \in S$ and $g$ of order at least three. Assume that we are in the latter case, then the Hilbert function of $R/fR$, which is the Hilbert function of $P/J^*$, is
\begin{eqnarray*}
\begin{tabular}{c|c|c|c|c|c|c|c|c|c}
\ $n$ \ & \ $0$ \ & \ $1$ \ & \ $2$ \ & \ $3$ \ & \ $4$ \ & \ $5$ \ & \ $6$ \ & \ $7$ \ & $\ldots$ \\
\hline
$HF_{R/fR}(n)$ & $1$ & $3$ & $5$ & $h$ & $ k$ & $0$ & $0$ & $0$ & $\ldots$
\end{tabular}
\end{eqnarray*}
with $k  = HF_{R/fR}(4) \ls 1$, because $R/fR$ is a zero-dimensional Gorenstein local ring and $\m^5 \subseteq fR$. By Macaulay's Theorem \ref{Mac} we must have $h \ls 7$. Also $\lambda(R/fR) > 16$ by Lemma \ref{sup}, because $(f^*) = (X^2)$ is contained in the minimal prime $(X,Y)$ of $G$, so that $f$ cannot be superficial. Therefore $h=HF_{R/fR}(3) = HF_{P/J^*}(3) = 7$ and $k=1$ are forced. On the other hand, $(f^*) = (X^2) \subseteq I^*$, therefore $K=I^*$ and $HF_{P/K}(3) = HF_G(3) = HF_R(3) = 6$. This is again a contradiction because of the surjection (\ref{surj}). Thus we can assume that $\lambda((J^*)_2) = 2$. In this case the Hilbert function of $R/fR$ is
\begin{eqnarray*}
\begin{tabular}{c|c|c|c|c|c|c|c|c|c}
\ $n$ \ & \ $0$ \ & \ $1$ \ & \ $2$ \ & \ $3$ \ & \ $4$ \ & \ $5$ \ & \ $6$ \ & \ $7$ \ & $\ldots$ \\
\hline
$HF_{R/fR}(n)$ & $1$ & $3$ & $4$ & $h$ & $k$ & $0$ & $0$ & $0$ & $\ldots$
\end{tabular}
\end{eqnarray*}
with $k \ls 1$. In addition, Macaulay's Theorem \ref{Mac} implies that $h\ls 5$. Thus $\lambda(R/fR) \ls 14 < 16 = 2e(R)$, contradicting Lemma \ref{sup}.  Finally, let us assume that $\ord(f) \gs 3$, so that $\lambda(R/fR) \gs 24$ by Lemma \ref{sup}. Since $HF_{R/fR}(4) \ls 1$ because $R$ is Gorenstein and we are assuming that $\m^5 \subseteq fR$, the maximal possible Hilbert function for $R/fR$ is
\begin{eqnarray*}
\begin{tabular}{c|c|c|c|c|c|c|c|c|c}
\ $n$ \ & \ $0$ \ & \ $1$ \ & \ $2$ \ & \ $3$ \ & \ $4$ \ & \ $5$ \ & \ $6$ \ & \ $7$ \ & $\ldots$ \\
\hline
$HF_{max}(n)$ & $1$ & $3$ & $6$ & $10$ & $1$ & $0$ & $0$ & $0$ & $\ldots$
\end{tabular}
\end{eqnarray*}
But then $\lambda(R/fR) \ls 21 < 24$, a contradiction. This shows that for any $f \in \m \smallsetminus\{0\}$ we have $\m^5 \not\subseteq fR$, and completes the proof that $\gll(R) = 6 > 5 = \ind(R)$.
\end{proof}
\section{Further examples and remarks}
Given that the conjecture fails in general, even for complete intersection domains of dimension one, one may wonder if the conjecture is true for some smaller classes of rings. A ring $R$ is called quasi-homogeneous if it isomorphic to the completion of a positively graded $k$-algebra at the irrelevant maximal ideal. In the counterexample of Section 3, it is not possible to give weights to the variables $x,y$ and $z$ that make the completion $\widehat R$ of $R$ quasi-homogeneous. This does not say that $R$ is not quasi-homogeneous, since such weights could exist for a different choice of minimal generators of the maximal ideal $\m$, however we were not able to find it. On the other hand, it is easy to find a quasi-homogeneous counterexample if one is not looking for a domain:
\begin{example} \label{ex1} Let $S=k\ps{x,y,z}$, where $k$ is any field, and let $\n = (x,y,z)$ be the maximal ideal of $S$. Consider the one dimensional complete intersection $R:=S/I$, where
\[
\ds I = (x^2-y^5,xy^2+yz^3)S.
\]
The ring $R$ is quasi-homogeneous, since it is the completion of the positively graded ring $k[x,y,z]/(x^2-y^5,xy^2+yz^3)$ with weights $w(x) = 15$, $w(y) = 6$ and $w(z) = 7$ at the maximal ideal $(x,y,z)$. Using CoCoA or Macaulay2, one sees that 
\[
\ds z\m^5:\m = (xy^2z,z^5,xz^4,y^3z^2,y^4z,y^2z^3,xyz^3)R \subseteq zR,
\]
and
\[
\ds z\m^4:\m = (xy^2,x^2y,z^4,xz^3,y^2z^2,xyz^2,y^3z)R \not\subseteq zR.
\]
Therefore $\ind(R) = 5$, and because $\m^6 \subseteq (y-z)$ we also have that $\gll(R)\ls 6$. On the other hand, note that $I+\n^5 = (x^2,xy^2+yz^3)+\n^5$, $I^*= (X^2,XY^2,XYZ^3,YZ^6) \subseteq k[X,Y,Z]$ and $e(R) = 8$ are the same as in the counterexample of Section 3. Therefore, using the same proof of Theorem \ref{main}, we get $\gll(R) = 6$. 
\end{example}
\begin{remark} \label{remDing} In \cite[Corollary 3.3]{Ding1}, Ding claims that the conjecture is true for what he calls gradable rings such that $\Depth({\rm gr}_\m(R)) \gs \dim(R)-1$. In our notation, gradable rings correspond to quasi-homogeneous rings. His argument is not correct, since he uses results that need a standard grading, i.e. the weights of all the minimal generators of $\m$ must be one. Example \ref{ex1} gives a counterexample to his statement. 
\end{remark}
Another direction of investigation is to consider Gorenstein analytically irreducible rings, i.e. local rings such that the completion at the maximal ideal is a domain. We thank William Heinzer for pointing out that the counterexample in Section 3 is not such:
\[
\ds \widehat R \cong \frac{k\ps{x,y,z}}{(x^2-y^5,xy^2+yz^3-z^5)} \cong \frac{k\ps{t^2,t^5,z}}{(z^5-t^2z^3 - t^9)} \subseteq \frac{k\ps{t,z}}{(z^5-t^2z^3 - t^9)}:= T,
\]
is an integral extension. The inclusion follows from the fact that $t^4$ is in the conductor $\widehat{R}:_{\widehat{R}} T$, and it is a non zero-divisor in $\widehat{R}$. The initial form $z^5-t^2z^3$ has two relatively prime non-constant factors $z^3$ and $z^2-t^2$ in $\QQ\ps{t,z}$, therefore $T$ is not a domain \cite[Theorem 16.6]{KunzAlgCurves}. Since $\widehat{R}$ and $T$ have the same total ring of fractions, $\widehat R$ is also not a domain. 

However, there is an analytically irreducible counterexample:
\begin{example} \label{ex2} Let $S=\QQ[x,y,z]_{(x,y,z)}$ and let $\n = (x,y,z)S$ be the maximal ideal of $S$. Consider the one dimensional domain
\[
\ds R:=\QQ[t^8+t^{10},t^9,t^{20}+t^{36}]_{(t^8+t^{10},t^9,t^{20}+t^{36})}
\]
and let $\m=(t^8+t^{10},t^9,t^{20}+t^{36})R$ be its maximal ideal. Using CoCoA or Macaulay2 one sees that $R \cong S/I$, where
\[
\ds I = (z^2+f_1,y^4-x^2z+2y^2z+z^2+f_2) \subseteq S
\]
for some $f_1,f_2 \in \n^5$. In particular, $R$ is a complete intersection. We checked with Macaulay2 and CoCoA that $\m^6 \subseteq xR$, $x\m^5:\m \subseteq xR$ and $x\m^4:\m \not\subseteq xR$, hence $\ind(R)= 5$ and $\gll(R) \ls 6$. On the other hand, there does not exist $f \in S$ such that $\n^5 \subseteq I + (f)$, otherwise $I+(f) = (z^2,y^4-x^2z+2y^2z,f) + \n^5$ and since $e(R) = 8$ one can use arguments which are analogous to the ones used in the proof of Theorem \ref{main} to show that this cannot happen. Therefore $\gll(R) = 6$. Finally, to show that $R$ is analytically irreducible, notice that $R \subseteq \QQ[t]_\m:= V$ is an integral birational extension, and since $V$ is normal we have in fact that $V$ is the integral closure of $R$ in its field of fractions. The ring $V$ is semi-local, with maximal ideals $N_1,\ldots,N_s$. Since $N_i \cap R = \m$, each $N_i$ contains $t^9$, and hence it must contain $t$. So $V$ has only one maximal ideal, namely $(t)V$, and thus it is local. There is a well known one to one correspondence between maximal ideals in the integral closure of $R$ and minimal primes in $\widehat R$, therefore $\widehat R$ is a domain. 
\end{example}
%$\ds t= \frac{[t^{20}+t^{36}] +[t^9]^2- [t^9]^4}{[t^9][t^8+t^{10}]}$
\begin{remark} If $R$ is an equicharacteristic one dimensional complete local domain, the integral closure $\ov R$ of $R$ in its quotient field is isomorphic to a power series ring $k\ps{t}$. Thus, $R$ is of the form $k\ps{f_1,\ldots,f_n}$ for some $f_1,\ldots,f_n \in k\ps t$. Ding proved that the conjecture is true when the $f_i$'s are monomials in $t$ \cite[Proposition 2.6]{Ding}. This is no longer true if the $f_i$'s are not monomials, as Example \ref{ex2} shows. In fact, with the notation introduced above, we have that $\widehat R$ is a one dimensional complete equicharacteristic local domain, and by \cite[Lemma 3.3 and Corollary 5.2]{HaSh} we have that
\[
\ds \ind(\widehat R) = \ind(R) = 5 < 6 = \gll(R) = \gll(\widehat R).
\]
\end{remark}
Given the results in this article, it seems natural to ask the following questions.

\begin{questions} \label{quests} Let $(R,\m,k)$ be a Gorenstein local ring with infinite residue field.
\begin{enumerate}[(i)]
\item Are $\ind(R) = 5$ and $\gll(R) = 6$ minimal possible values for an example where $\ind(R) \ne \gll(R)$?
\item Is $\gll(R)$ always attained by a system of parameters that generates a reduction of $\m$?
\end{enumerate}
\end{questions}
\begin{remark} We conclude by making some comments about the role of computer algebra programs in the proofs contained in this article. In Theorem \ref{main}, we justify certain arguments, for instance the inclusion $y\m^5:\m \subseteq yR$, the initial ideal $I^*$ and its primary decomposition, and the Hilbert function of $R$, by saying that we checked them with CoCoA and Macaulay2. However, we also verified the validity of these statements by hand. These are just tedious computations, adding no real content to the argument, and we decided not include them in this article. Analogous considerations apply to Example \ref{ex1}. On the other hand, in Example \ref{ex2}, both the equations in $S$ defining $R$ and the inclusion $x\m^5:\m \subseteq xR$ are rather hard computationally, hence the correctness of Example \ref{ex2} relies heavily on a careful analysis of the answers given by CoCoA and Macaulay2.
\end{remark}

\section*{Acknowledgements}
The author is extremely grateful to Craig Huneke for bringing the problem to his attention, for sharing insightful ideas and for constant support. The author would also like to thank the referee for very useful comments, and for suggesting Questions \ref{quests}.

\bibliographystyle{alpha}
\bibliography{References}
{\footnotesize

\vspace{0.5in}

\noindent \small \textsc{Department of Mathematics, University of Virginia, Charlottesville, VA  22903} \\ \indent \emph{Email address}:  {\tt ad9fa@virginia.edu} 
}

\end{document}